\newcommand{\R}{\mathbb{R}}
\newtheorem{theorem}{Theorem}
\newtheorem{remark}{Remark}
\newtheorem{lemma}{Lemma}
\begin{document}
	
\title[Peaked Stokes waves]{\bf Peaked Stokes waves as solutions of Babenko's equation}

\author{Spencer Locke}
\address[S. Locke]{Department of Mathematics and Statistics, McMaster University, Hamilton, Ontario, Canada, L8S 4K1}
	
\author{Dmitry E. Pelinovsky}
\address[D. E. Pelinovsky]{Department of Mathematics and Statistics, McMaster University, Hamilton, Ontario, Canada, L8S 4K1}
\email{dmpeli@math.mcmaster.ca}

\begin{abstract}
Babenko's equation describes traveling water waves in holomorphic coordinates. It has been used in the past to obtain properties of Stokes waves with smooth profiles analytically and numerically. We show in the deep-water limit that properties of Stokes waves with peaked profiles can also be recovered from the same Babenko's equation. In order to develop the local analysis of singularities, we rewrite Babenko's equation as a fixed-point problem near the maximal elevation level. As a by-product, our results rule out a corner point singularity in the holomorphic coordinates, which has been obtained in a local version of Babenko's equation. 
\end{abstract}

\maketitle

Since the work of G. Stokes \cite{Stokes}, the periodic traveling wave solutions to Euler's equations -- referred to as Stokes' waves --  have been studied in the context of incompressible and irrotational two-dimensional surface water waves. According to the famous Stokes conjecture \cite{Toland}, the peaked profiles of the traveling waves with wave speed $c$ form an angle of $120^o$ in physical coordinates $(x,y)$ at the spatially dependent surface $y = \eta(x-ct)$. Since the two-dimensional hydrodynamics can be analyzed by using the complex variable $z := x + iy$ and conformal transformations to the new complex variable $w := u + iv$, the study of the traveling waves with peaked profiles has relied for a long time on pseudo--differential equations rewritten in holomorphic coordinates \cite{T02}.  The fluid domain in the $(x,y)$-plane is transformed to the flat domain in the $(u,v)$-plane by using the holomorphic coordinates, with the surface elevation profile $\eta$ being a function of $u-ct$ at $v = 0$.

With the use of the holomorphic coordinates, the existence of the peaked Stokes waves has been proven in \cite{Amick} and the $120^o$ angle was proven in \cite{Plotnikov}. Convexity of Stokes waves was further established in \cite{PlotToland}. Asymptotic results about the limiting singularity 
of $\eta$ as a function of $u-ct$ were found in \cite{Grant,Tanvir} and were recently reviewed in \cite{Lushnikov} as the limit of smooth Stokes waves. Numerical approximations of the Stokes waves with smooth profiles break down before the wave can approach to the limiting peaked profile \cite{DCD16,Lush1,Lush2,Lush3}. Holomorphic coordinates were also used for analysis of well-posedness of Euler's water wave equations \cite{Ifrim1,Ifrim2}. Further studies on the peaked waves were undertaken for fluids with finite or infinite depth and under the constant vorticity \cite{KL21,KL23,V09,VW12}, as well as for periodic and solitary wave profiles in the model equations \cite{BD19,D23,EMV23,GP19}.

Without loss of generality, we consider the surface elevation $\eta$ on the $2\pi$-periodic domain denoted by $\mathbb{T}$ and assume that it satisfies 
the zero mean constraint $\oint \eta dx = 0$ in the physical coordinate $x$. 
Bernoulli's principle implies that the velocity potential satisfies 
\begin{equation}
\label{pressure}
-c \varphi_x + \frac{1}{2} (\varphi_x)^2 + \frac{1}{2} (\varphi_y)^2 + \eta = 0, \quad \mbox{\rm at} \;\; y = \eta(x-ct),
\end{equation}
where the pressure is set to $0$ at the free surface and the gravitational acceleration constant $g$ is set to $1$. It follows from (\ref{pressure}) with $\varphi_x = c$ and $\varphi_y = 0$ at the maximal height of the surface 
elevation for the peaked wave that the peaked profile satisfies
\begin{equation}
\label{max-height}
\max_{x \in \mathbb{T}} \eta(x) = \frac{c^2}{2}.
\end{equation}
The location of the maximal height can be placed at the origin due to the translational invariance. The existence and the singular behavior of the peaked wave with the profile $\eta$ in the holomorphic coordinate $u$ is well-known, see \cite{Lushnikov,Plotnikov}, and it satisfies the expansion
\begin{equation}
\label{sing-behavior}
\eta(u) = \frac{c^2}{2} - A |u|^{2/3} + {\rm o}(|u|^{2/3}) \quad \mbox{\rm as} \;\; u \to 0,
\end{equation}
where $A > 0$ is some constant and the admissible values of $c$ are not known explicitly. The singular behavior (\ref{sing-behavior}) leads to the $120^o$ angle in the physical coordinates $(x,y)$ \cite{Plotnikov}.

The main motivation for our work is to explore a closed equation for traveling Stokes waves in holomorphic coordinates called Babenko's equation, named after 
the pioneering work in \cite{Babenko}. Babenko's equation can be written for the surface elevation in the form 
\begin{equation}
\label{Babenko}
(c^2 K_h - 1) \eta - \eta K_h \eta - \frac{1}{2} K_h \eta^2 = 0,
\end{equation}
where $K_h$ is a linear, self-adjoint, positive operator in $L^2(\mathbb{T})$ defined by the Fourier symbol
\begin{equation}
\label{operator-K}
\widehat{\left( K_h \right)}_n = \left\{ \begin{array}{cl} n \coth(hn), &\quad n \in \mathbb{Z} \backslash \{0\},  \\ 0, & \quad n = 0, \end{array} \right.
\end{equation}
and $h$ is the fluid depth in the holomorphic coordinate $v$, see Appendix A in \cite{LP24}. Babenko's equation in the form (\ref{Babenko}) has been derived in the case of the zero-mean constraint  $\oint \eta dx = 0$ in the physical coordinate $x$, which is equivalent to the following constraint for the profile $\eta \in {\rm Dom}(K_h) \subset L^2(\mathbb{T})$, 
\begin{equation}
\label{mean-zero}
\oint \eta (1 + K_h \eta) du = 0,
\end{equation}
in the holomorphic coordinate $u$. Babenko's equation (\ref{Babenko}) was used in \cite{DZ1,DZ2} to get small-amplitude expansions of Stokes waves with smooth profiles and the conserved quantities in the dynamics of water waves. It was used for numerical approximations of Stokes waves with steeper, though still smooth, profiles in \cite{Lush1,Lush2,Lush3}.  

{\em The purpose of this paper is to show that Babenko's equation (\ref{Babenko}) gives accurate predictions for Stokes waves with peaked profiles, which agrees with the expansion (\ref{sing-behavior}) obtained directly from Euler's equations.} 

To make computations simple and short, we consider the deep-water limit ($h \to \infty$), for which Babenko's equation (\ref{Babenko}) is now used with $K_{\infty} := \lim\limits_{h \to \infty} K_h = -\mathcal{H} \partial_u$, where $\mathcal{H}$ is the periodic Hilbert transform defined by either the Fourier symbol 
\begin{equation}
\label{Hilbert}
\widehat{\left(\mathcal{H}\right)}_n = \left\{ \begin{array}{cl} i \; {\rm sgn}(n), &\quad n \in \mathbb{Z} \backslash \{0\},  \\ 0, & \quad n = 0, \end{array} \right.
\end{equation}
or the integral formulation 
\begin{equation}
\label{Hilbert-integral}
\mathcal{H}(f)(u) := \frac{1}{\pi} \sum_{n=-\infty}^{\infty} {\rm p.v} \int_{\mathbb{T}} \frac{f(u')}{u' - u + 2 \pi n} du'.
\end{equation}
The integral formulation (\ref{Hilbert-integral}) is particularly useful to prove the following main results. 

\begin{theorem}
	\label{theorem-1}
	Assume that Babenko's equation (\ref{Babenko}) in the deep-water limit $h \to \infty$ admits a solution with the local behavior 
\begin{equation}
\label{behavior-1}
\eta(u) = \frac{c^2}{2} - A |u|^{\beta} + {\rm o}(|u|^{\beta}) \quad \mbox{\rm as} \;\; u \to 0,
\end{equation}	
where $\beta \in (0,1]$ for some admissible values of $A > 0$ and $c \in \mathbb{R}$. Then, $\beta = \frac{2}{3}$. 
\end{theorem}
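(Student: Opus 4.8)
The plan is to substitute the ansatz (\ref{behavior-1}) into Babenko's equation and extract the leading singular behaviour near the crest $u=0$. First I would write $\eta(u) = \frac{c^2}{2} + w(u)$ with $w(u) = -A|u|^\beta + {\rm o}(|u|^\beta)$, and use that $K_\infty$ annihilates constants. Substituting into (\ref{Babenko}) with $K_h$ replaced by $K_\infty$, the linear contributions proportional to $c^2 K_\infty w$ cancel \emph{exactly} against the terms coming from $-\eta K_\infty \eta$ and $-\frac{1}{2}K_\infty \eta^2$, precisely because $\frac{c^2}{2}$ is the maximal elevation. This collapses (\ref{Babenko}) to the reduced fixed-point relation
\begin{equation*}
\frac{c^2}{2} + w + w K_\infty w + \frac{1}{2} K_\infty w^2 = 0 \qquad \text{near } u = 0,
\end{equation*}
in which the linear operator no longer acts on $w$ at leading order, so that all the information about $\beta$ is carried by the two quadratic terms.

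Next I would compute the local action of $K_\infty = -\mathcal{H}\partial_u$ on a pure power. Using the integral representation (\ref{Hilbert-integral}) --- whose $n=0$ term governs the singularity, while the remaining terms and the periodic-versus-line discrepancy contribute only regular corrections --- together with the classical evaluations $\int_0^\infty \frac{s^{\beta-1}}{1+s}\,ds = \frac{\pi}{\sin\pi\beta}$ and ${\rm p.v.}\int_0^\infty \frac{s^{\beta-1}}{1-s}\,ds = \pi\cot\pi\beta$, I obtain
\begin{equation*}
K_\infty |u|^\beta = C_\beta\, |u|^{\beta-1} + (\text{regular}), \qquad C_\beta := -\beta \tan\!\left(\tfrac{\pi\beta}{2}\right),
\end{equation*}
valid for $\beta \in (0,1)$. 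Feeding this into the reduced relation gives $K_\infty w = A\beta\tan(\tfrac{\pi\beta}{2})|u|^{\beta-1}+\cdots$, so that $w K_\infty w = A^2 C_\beta |u|^{2\beta-1}+\cdots$ and $\frac{1}{2}K_\infty w^2 = \frac{1}{2}A^2 C_{2\beta}|u|^{2\beta-1}+\cdots$; hence the quadratic part produces a leading singular term $A^2\big(C_\beta + \frac{1}{2}C_{2\beta}\big)|u|^{2\beta-1}$.

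The decisive observation is that for $\beta \in (0,1)$ one has $2\beta-1 < \beta$, so $|u|^{2\beta-1}$ is strictly more singular than the leading term $-A|u|^\beta$ of $w$. Since the hypothesis declares $-A|u|^\beta$ to be the leading non-constant singular behaviour, nothing in the reduced relation (the constant $\frac{c^2}{2}$, the regular parts, and the less-singular tail ${\rm o}(|u|^\beta)$) can balance a nonzero coefficient at order $|u|^{2\beta-1}$. Therefore I would conclude $C_\beta + \frac{1}{2}C_{2\beta}=0$, i.e. $\tan(\tfrac{\pi\beta}{2}) + \tan(\pi\beta)=0$; writing $t = \tan(\tfrac{\pi\beta}{2})>0$ and using the double-angle identity reduces this to $t^2 = 3$, whose unique root with $\beta \in (0,1)$ is $\beta = \frac{2}{3}$.

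The main obstacle I anticipate is the rigorous control of the nonlocal operator $K_\infty$ on the remainder: since we only assume $w = -A|u|^\beta + {\rm o}(|u|^\beta)$, I must show that $K_\infty$ applied to the ${\rm o}(|u|^\beta)$ tail, and the cross terms it generates in $w K_\infty w$ and $\frac{1}{2}K_\infty w^2$, stay strictly below order $|u|^{2\beta-1}$, so that the leading coefficient is genuinely $A^2(C_\beta + \frac{1}{2}C_{2\beta})$ and uncontaminated. A separate, degenerate treatment is needed for the endpoint $\beta = 1$, where $2\beta-1=\beta$ and $C_\beta$ diverges: here $K_\infty|u| \sim \log|u|$ forces a $|u|\log|u|$ term in the quadratic part, which is more singular than $|u|$ and again uncancellable, so it would force $A=0$, contradicting $A>0$ and thereby ruling out the corner singularity in the holomorphic coordinates.
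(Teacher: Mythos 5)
Your overall strategy is the same as the paper's: pass to the deviation $\tilde\eta = \frac{c^2}{2}-\eta = -w$, observe the exact cancellation of the $c^2 K_\infty$ terms to get the fixed-point relation (your reduced equation is precisely the paper's $\tilde\eta = T_\infty\tilde\eta$), extract the local singular action of $K_\infty$ on powers via the Hilbert transform, and force the coefficient of the unmatched power $|u|^{2\beta-1}$ to vanish, yielding $\tan(\pi\beta/2)+\tan(\pi\beta)=0$ and hence $\beta = \tfrac23$. The trigonometric reduction and the treatment of $\beta=1$ via the $|u|\ln|u|$ term are correct and match the paper.

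However, there is a genuine gap at exactly the value of $\beta$ that matters. Your formula $K_\infty|u|^{\nu} = -\nu\tan(\pi\nu/2)\,|u|^{\nu-1}+(\text{regular})$ is justified by table integrals of the form $\int_0^\infty \frac{s^{\nu-1}}{1\pm s}\,ds$, which converge only for $\nu\in(0,1)$; you then apply it to $w^2 \sim A^2|u|^{2\beta}$ with $\nu=2\beta$. Since the root of the transcendental equation lies at $\beta=\tfrac23$, i.e.\ $2\beta=\tfrac43\in(1,2)$, the case you actually need is outside the range where your derivation works --- indeed for $2\beta>1$ the ``periodic-versus-line discrepancy'' $\int_{|u'|>\pi}\frac{|u'|^{2\beta-1}}{u'-u}\,du'$ diverges, so it is not a regular correction. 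The paper closes this by first differentiating, writing $\partial_u K_\infty|u|^{2\beta} = -2\beta(2\beta-1)\mathcal{H}(|u|^{2\beta-2})$ so that the exponent $2\beta-2$ falls back into the admissible range, integrating, and using $\cot\!\left(\tfrac{(2\beta-1)\pi}{2}\right) = -\tan(\pi\beta)$ to recover the same coefficient; you need some such continuation argument. Relatedly, you must also treat the marginal case $2\beta=1$ (i.e.\ $\beta=\tfrac12$), where $\tan(\pi\beta)$ blows up and $K_\infty|u|$ produces a $\ln|u|$ singularity that cannot be balanced; the paper rules this out separately, and your write-up omits it. With these two points repaired, your argument coincides with the paper's proof.
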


\begin{theorem}
	\label{theorem-2}
	Assume that Babenko's equation (\ref{Babenko}) in the deep-water limit $h \to \infty$ admits a solution with the local behavior 
	\begin{equation}
	\label{behavior-2}
	\eta(u) = \frac{c^2}{2} - A |u|^{2/3} - B |u|^{\mu} + {\rm o}(|u|^{\mu}) \quad \mbox{\rm as} \;\; u \to 0,
	\end{equation}	
	where $\mu \in \left(\frac{2}{3},2\right)$ for some admissible values of $A > 0$, $B \in \R$, and $c \in \mathbb{R}$. Then, $\mu$ is the second root of the transcendental equation 
	\begin{equation}
	\label{Grant-eq}
	\left( \mu + \frac{2}{3} \right) \tan\left(\frac{\pi \mu}{2} + \frac{\pi}{3} \right) + \mu \tan \left( \frac{\pi \mu}{2}\right) + \frac{2}{\sqrt{3}} = 0.
	\end{equation}
\end{theorem}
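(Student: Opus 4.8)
The plan is to substitute the local ansatz (\ref{behavior-2}) into Babenko's equation (\ref{Babenko}) and match the singular powers of $|u|$ that the nonlocal operator $K_\infty$ generates. First I would write $\eta = \tfrac{c^2}{2} + \tilde\eta$, where $\tilde\eta = -A|u|^{2/3} - B|u|^\mu + \mathrm{o}(|u|^\mu)$ is the singular part vanishing at $u=0$. Since $K_\infty$ annihilates constants, $K_\infty\eta = K_\infty\tilde\eta$, and a short computation shows that the three contributions linear in $K_\infty\tilde\eta$ cancel identically, leaving the reduced equation
\[
-\frac{c^2}{2} - \tilde\eta - \tilde\eta\, K_\infty\tilde\eta - \frac12 K_\infty\tilde\eta^2 = 0 ,
\]
which is the fixed-point form near the maximal elevation referred to in the introduction.

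The second ingredient is a local formula for the action of $K_\infty = -\mathcal{H}\partial_u$ on a pure power. Using $\partial_u|u|^\alpha = \alpha\,\mathrm{sgn}(u)|u|^{\alpha-1}$ together with the Hilbert-transform identity $\mathcal{H}(\mathrm{sgn}(u)|u|^{\gamma}) = -\cot(\tfrac{\pi\gamma}{2})|u|^{\gamma}$, which I would read off from the Fourier symbol (\ref{Hilbert}) or derive from (\ref{Hilbert-integral}), I obtain for non-integer $\alpha>0$ the expansion
\[
K_\infty\!\left(|u|^\alpha\right) = -\alpha\tan\!\left(\frac{\pi\alpha}{2}\right)|u|^{\alpha-1} + \text{smooth near } u=0 ,
\]
where the periodization and the far field only contribute a function that is smooth at $u=0$ and hence carries integer powers. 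As a consistency check, inserting $\tilde\eta \sim -A|u|^{2/3}$ into the reduced equation, the lowest singular order $|u|^{1/3}$ has coefficient proportional to $\tan(\tfrac{\pi\beta}{2}) + \tan(\pi\beta)$ with $\beta = 2/3$, which vanishes and recovers Theorem \ref{theorem-1}.

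Now I would carry the two-term ansatz through the reduced equation and organize the output by powers of $|u|$. Writing $C(\alpha) := -\alpha\tan(\tfrac{\pi\alpha}{2})$, the order $|u|^{1/3}$ cancels by the Theorem \ref{theorem-1} balance $C(\tfrac23) + \tfrac12 C(\tfrac43)=0$, while the order $|u|^{2/3}$ yields only a relation for the amplitude $A$ (and, through the constant $-\tfrac{c^2}{2}$, ties the global profile to $c$), so it does not constrain $\mu$. The exponent $\mu$ is then determined by the first genuinely new order, $|u|^{\mu-1/3}$, which collects exactly three cross contributions: $C(\mu)$ and $C(\tfrac23)$ from $-\tilde\eta\,K_\infty\tilde\eta$, and $C(\mu+\tfrac23)$ from $-\tfrac12 K_\infty\tilde\eta^2$ acting on the mixed term $2AB|u|^{\mu+2/3}$. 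Setting the coefficient $-AB\,[\,C(\mu)+C(\tfrac23)+C(\mu+\tfrac23)\,]$ to zero and substituting the tangent formula gives precisely (\ref{Grant-eq}), upon using $\tfrac23\tan(\tfrac\pi3) = \tfrac{2}{\sqrt3}$. One checks that $\mu=\tfrac23$ already solves (\ref{Grant-eq}), reflecting the leading term, so the admissible $\mu\in(\tfrac23,2)$ must be the \emph{next} root; a sign analysis of the left-hand side of (\ref{Grant-eq}) across the pole at $\mu=1$, where it tends to $-\infty$ as $\mu\to1^+$ and is positive as $\mu\to2^-$, locates this second root in $(1,2)$ by the intermediate value theorem.

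The main obstacle is analytic rather than algebraic: I must justify that the remainder $\mathrm{o}(|u|^\mu)$ in (\ref{behavior-2}) and the smooth pieces of $K_\infty|u|^{2/3}$ and $K_\infty|u|^\mu$ do not generate spurious contributions at the critical order $|u|^{\mu-1/3}$, since $K_\infty$ is nonlocal and a bare little-o bound need not survive differentiation followed by the Hilbert transform. I expect to control this through the integral representation (\ref{Hilbert-integral}): splitting the integral into a neighborhood of $u=0$, where the homogeneous computation applies, and a smooth complement, one shows the error terms are $\mathrm{o}(|u|^{\mu-1/3})$ and that the competing orders $|u|^{2\mu-1}$ and $|u|^{\mu}$ stay strictly above $|u|^{\mu-1/3}$ for $\mu\in(1,2)$. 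Once this localization is secured, the $|u|^{\mu-1/3}$ balance is clean and forces (\ref{Grant-eq}).
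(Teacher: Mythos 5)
Your proposal follows essentially the same route as the paper: pass to the deviation $\tilde\eta$ from the maximal height $c^2/2$ (the paper uses $\tilde\eta := \tfrac{c^2}{2}-\eta$, the opposite sign to yours, but your reduced equation is exactly the paper's fixed-point equation (\ref{fixed-point}) after this sign flip), apply the local formula $K_\infty |u|^{\alpha} = -\alpha\tan(\tfrac{\pi\alpha}{2})|u|^{\alpha-1}+\text{smooth}$ obtained from the periodized Hilbert transform (the paper's Lemmas \ref{lem-sing} and \ref{lem-sing-2}), and set the coefficient of the unmatched order $|u|^{\mu-1/3}$ to zero. Your bookkeeping $C(\mu)+C(\tfrac23)+C(\mu+\tfrac23)=0$ reproduces (\ref{Grant-eq}) correctly, the $|u|^{1/3}$ cancellation is the right consistency check, and your concern about controlling the periodization and the $\mathrm{o}(|u|^{\mu})$ remainder is legitimate and handled in the paper by the same splitting of (\ref{Hilbert-integral}) with Cauchy estimates.

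There is one concrete omission: the marginal case $\mu=1$, which lies in the admissible interval $\left(\tfrac23,2\right)$ but on which your entire computation is silent, since you assume the exponents are non-integer and $C(1)=-\tan(\tfrac{\pi}{2})$ is undefined (indeed the left-hand side of (\ref{Grant-eq}) has a pole there, which you notice only when locating the root). For $\mu=1$ the power-law formula must be replaced by the logarithmic expansion $K_\infty|u|=\tfrac{2}{\pi}\ln|u|+\mathcal{O}(1)$, so that $\tilde\eta\,K_\infty\tilde\eta$ produces a term of order $|u|^{2/3}\ln|u|$ which cannot be balanced by anything else in the fixed-point equation; this is how the paper excludes $\mu=1$ in a separate short paragraph, and your proof needs the same step before you may conclude that $\mu$ solves (\ref{Grant-eq}) and hence equals its second root.
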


\begin{remark}
	In our previous work \cite{LP24}, we considered Babenko's equation (\ref{Babenko}) with 
	$K_h \equiv -\partial_u^2$ as a toy model. We proved \cite[Theorem 1]{LP24} that there exist $c_* := \frac{\pi}{2 \sqrt{2}}$ and $c_{\infty} \in (c_*,\infty)$ such that 
the family of periodic solutions admits smooth profiles for $c \in (1,c_*)$ and 
singular profiles satisfying the expansion (\ref{sing-behavior}) for $c \in (c_*,c_{\infty})$. The periodic solution for $c = c_*$ has the peaked profile with $\eta \in C^0_{\rm per}(\mathbb{T}) \cap W^{1,\infty}(\mathbb{T})$; that is, it corresponds to the expansion (\ref{behavior-1}) with $\beta = 1$. Theorem \ref{theorem-1} rules out the possibility of such peaked profiles in the holomorphic coordinate $u$ at least in the deep-water limit. 
\end{remark}

\begin{remark}
	The transcendental equation (\ref{Grant-eq}) was derived in \cite{Grant} by direct analysis of Euler's equations and in a different form. The second root of this equation can be found numerically at $\mu \approx 1.469$. Hence the profile satisfies $\eta(u) + A |u|^{2/3} \in C^1_{\rm per}(\mathbb{T})$ beyond the leading-order singularity. 
	Thus, the known results for the asymptotic analysis of the traveling waves with the singular profiles can be recovered from Babenko's equation (\ref{Babenko}) in Theorem \ref{theorem-2}.
\end{remark}

\begin{remark}
	Local analysis in the proof of Theorems \ref{theorem-1} and \ref{theorem-2} does not allow us to obtain the admissible values for $A > 0$, $B \in \R$, and $c \in \mathbb{R}$. It is not clear if the admissible values of the wave speed $c$ includes just one point, as believed from numerical data in \cite{Lush1,Lush3}, or an interval of wave speeds, as follows from the local model in \cite{LP24}. This unsatisfactory conclusion is because these coefficients are obtained from the remainder terms of the asymptotic expansions, which cannot be deduced without a complete solution of Babenko's equation (\ref{Babenko}). 
\end{remark}

\begin{remark}
	Since the existence of the Stokes wave with the peaked profile satisfying the expansion (\ref{sing-behavior}) has been proven in \cite{Amick,Plotnikov,PlotToland} and the singular behavior of Theorems \ref{theorem-1} and \ref{theorem-2} for Babenko's equation (\ref{Babenko}) agrees with (\ref{sing-behavior}), we strongly believe that the existence assumption is satisfied in Babenko's equation. The proof of existence is beyond the scope of this work.
\end{remark}

To prove the main results of Theorems \ref{theorem-1} and \ref{theorem-2}, we use (\ref{max-height}) and introduce the deviation of the surface elevation from the maximal height as 
\begin{equation}
\label{tilde-eta}
\tilde{\eta} := \frac{c^2}{2} - \eta.
\end{equation} 
Using (\ref{Babenko}) with $K_{\infty} = - \mathcal{H} \partial_u$, we obtain the following fixed-point equation for the deviation coordinate
\begin{equation}
\label{fixed-point}
\tilde{\eta} = T_{\infty} \tilde{\eta} := \frac{c^2}{2} + \frac{1}{2} K_{\infty} \tilde{\eta}^2 + \tilde{\eta} K_{\infty} \tilde{\eta}.
\end{equation}
The method of the proof is to assume the expansions (\ref{behavior-1}) and (\ref{behavior-2}) of $\eta$ in fractional powers of $u$ and to get a contradiction with the range of $T_{\infty}$. We note the following table integral from \cite{GR} for every $\nu \in (0,1)$ and $u \in \mathbb{R}$:
\begin{equation}
\label{table-intergral}
\frac{1}{\pi} {\rm p.v.} \int_{-\infty}^{\infty} \frac{|u'|^{\nu - 1}}{u' - u}du' = 
- \cot \left( \frac{\pi \nu}{2} \right) |u|^{\nu - 1} {\rm sgn}(u). 
\end{equation}
Based on the exact formula (\ref{table-intergral}) and the decomposition of (\ref{Hilbert-integral}), we show the following.

\begin{lemma}
	\label{lem-sing}
	For every $\nu \in (0,1)$, $u_0 \in (0,\pi)$, and $u \in (-u_0,u_0)$ we have
\begin{equation}
\label{Hilbert-table}
\mathcal{H}(|u|^{\nu - 1}) = 
	- \cot \left( \frac{\pi \nu}{2} \right) |u|^{\nu - 1} {\rm sgn}(u) + F_{\nu}(u), 
\end{equation}
where $F_{\nu} \in C^{\infty}(-u_0,u_0)$.
\end{lemma}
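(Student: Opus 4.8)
The plan is to isolate the genuinely singular contribution to $\mathcal{H}(|u|^{\nu-1})$ coming from the diagonal $u'=u$ and to show that everything else is smooth on $(-u_0,u_0)$. Starting from the integral formulation (\ref{Hilbert-integral}) with $\mathbb{T}=[-\pi,\pi]$, I would split the defining sum into the $n=0$ term and the remainder over $n\neq 0$:
$$\mathcal{H}(|u|^{\nu-1})(u) = \frac{1}{\pi}\,{\rm p.v.}\int_{-\pi}^{\pi}\frac{|u'|^{\nu-1}}{u'-u}\,du' + \frac{1}{\pi}\sum_{n\neq 0}\int_{-\pi}^{\pi}\frac{|u'|^{\nu-1}}{u'-u+2\pi n}\,du'.$$
Only the $n=0$ term requires a principal value, since for $n\neq 0$ the constraints $u'\in[-\pi,\pi]$ and $u\in(-u_0,u_0)$ with $u_0<\pi$ force $|u'-u+2\pi n|\geq \pi-u_0>0$.

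For the $n=0$ term I would invoke the table integral (\ref{table-intergral}). Writing $\int_{-\infty}^{\infty}=\int_{-\pi}^{\pi}+\int_{|u'|>\pi}$ and noting that the only singularity at $u'=u$ lies inside $[-\pi,\pi]$, the tail carries no principal value, so
$$\frac{1}{\pi}\,{\rm p.v.}\int_{-\pi}^{\pi}\frac{|u'|^{\nu-1}}{u'-u}\,du' = -\cot\!\left(\frac{\pi\nu}{2}\right)|u|^{\nu-1}{\rm sgn}(u) - \frac{1}{\pi}\int_{|u'|>\pi}\frac{|u'|^{\nu-1}}{u'-u}\,du'.$$
This produces exactly the singular term in (\ref{Hilbert-table}), while the tail integral is a candidate for part of $F_\nu$: for $|u'|>\pi$ and $|u|<u_0<\pi$ the denominator stays bounded away from zero, and since the integrand decays like $|u'|^{\nu-2}$ at infinity, I can differentiate under the integral sign arbitrarily many times to conclude it is $C^{\infty}(-u_0,u_0)$.

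The main work, and the expected obstacle, is to show the remainder sum over $n\neq 0$ is smooth, because $\sum_{n\neq 0}(u'-u+2\pi n)^{-1}$ converges only conditionally. I would handle this by pairing the $n$ and $-n$ terms,
$$\frac{1}{u'-u+2\pi n} + \frac{1}{u'-u-2\pi n} = \frac{2(u'-u)}{(u'-u)^2 - 4\pi^2 n^2},$$
whose right-hand side is $O(n^{-2})$ uniformly for $u'\in[-\pi,\pi]$ and $u$ in compact subsets of $(-u_0,u_0)$. Hence the symmetric sum converges absolutely and uniformly together with all its $u$-derivatives, defining a kernel that is smooth in $(u',u)$; integrating it against the fixed integrable weight $|u'|^{\nu-1}$ then yields a $C^{\infty}$ function of $u$. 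Collecting the tail integral and this remainder into $F_\nu$ completes the proof. The only delicate point throughout is the justification of differentiation under the integral and summation signs, which in each case reduces to the uniform bounds above combined with the integrability of $|u'|^{\nu-1}$ near $u'=0$.
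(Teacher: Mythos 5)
Your proof is correct and follows essentially the same route as the paper: both split $\mathcal{H}(|u|^{\nu-1})$ into the $n=0$ principal-value piece, evaluated via the table integral (\ref{table-intergral}) after separating off a smooth tail over $|u'|>\pi$, plus the $n\neq 0$ remainder sum, which is shown to be smooth on compact subsets of $(-\pi,\pi)$. The only difference is that where the paper invokes Cauchy estimates to conclude the $n\neq 0$ remainder is holomorphic, you give a self-contained elementary argument by pairing the $n$ and $-n$ terms to obtain absolute, uniform convergence of the sum and all its $u$-derivatives --- a welcome way of making the conditional convergence of $\sum_{n\neq 0}(u'-u+2\pi n)^{-1}$ explicit rather than implicit.
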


\begin{proof}
	We can write 
\begin{align*}
{\rm p.v.} \int_{-\infty}^{\infty} \frac{|u'|^{\nu - 1}}{u' - u}du' = {\rm p.v.} \int_{-\pi}^{\pi} \frac{|u'|^{\nu - 1}}{u' - u}du' + \left( \int_{-\infty}^{-\pi} + \int^{\infty}_{\pi} \right) \frac{|u'|^{\nu - 1}}{u' - u}du',
\end{align*}
where the remainder term is $C^{\infty}$ for any $u \in (-\pi,\pi)$ if $\nu \in (0,1)$. 
On the other hand, writing
\begin{align*}
\pi \mathcal{H}(|u|^{\nu - 1})  &= {\rm p.v.} \int_{-\pi}^{\pi} \frac{|u'|^{\nu - 1}}{u' - u}du' + \sum_{n\in \mathbb{Z} \backslash \{0\}} \int_{-\pi}^{\pi} \frac{|u'|^{\nu-1}}{u'-u+ 2\pi n}du'
\end{align*}
and estimating the remainder term by using Cauchy estimates (see Theorem 7.1 in \cite{Locke-thesis}) shows that 
the remainder term is holomorphic on any compact subset of $(-\pi,\pi)$. Combining the two expansions together with the table integral (\ref{table-intergral}) yields (\ref{Hilbert-table}).
\end{proof}

Since $\mathcal{H}^2 = -{\rm Id}$, we also obtain from Lemma \ref{lem-sing} with a similar decomposition that 
\begin{lemma}
	\label{lem-sing-2}
	For every $\nu \in (0,1)$, $u_0 \in (0,\pi)$, and $u \in (-u_0,u_0)$ we have
	\begin{equation}
	\label{Hilbert-table-2}
	\mathcal{H}(|u|^{\nu - 1} {\rm sgn}(u)) = 
\tan \left( \frac{\pi \nu}{2} \right) |u|^{\nu - 1} + F_{\nu}(u), 
	\end{equation}
	where $F_{\nu} \in C^{\infty}(-u_0,u_0)$.
\end{lemma}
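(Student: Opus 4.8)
The plan is to deduce Lemma~\ref{lem-sing-2} directly from Lemma~\ref{lem-sing} using the algebraic identity $\mathcal{H}^2 = -{\rm Id}$, keeping in mind that on $\mathbb{T}$ the Hilbert transform annihilates the mean, so that $\mathcal{H}^2 f = -\left(f - \bar{f}\right)$ with $\bar{f} := \frac{1}{2\pi}\oint f\,du$. First I would solve the identity (\ref{Hilbert-table}) of Lemma~\ref{lem-sing} for the odd singular power, obtaining on $(-u_0,u_0)$
\[
|u|^{\nu-1}{\rm sgn}(u) = -\tan\left(\frac{\pi\nu}{2}\right)\mathcal{H}\left(|u|^{\nu-1}\right) + \tan\left(\frac{\pi\nu}{2}\right) F_\nu(u),
\]
where $F_\nu \in C^\infty(-u_0,u_0)$ is the remainder furnished by Lemma~\ref{lem-sing}. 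Applying $\mathcal{H}$ to both sides and using $\mathcal{H}^2\left(|u|^{\nu-1}\right) = -\left(|u|^{\nu-1} - \overline{|u|^{\nu-1}}\right)$ then yields
\[
\mathcal{H}\left(|u|^{\nu-1}{\rm sgn}(u)\right) = \tan\left(\frac{\pi\nu}{2}\right)|u|^{\nu-1} + \tan\left(\frac{\pi\nu}{2}\right)\mathcal{H}(F_\nu)(u) - \tan\left(\frac{\pi\nu}{2}\right)\overline{|u|^{\nu-1}},
\]
which has exactly the form (\ref{Hilbert-table-2}) once the last two terms are collected into a (reused) symbol $F_\nu$.

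The step that requires care -- and the one I expect to be the main obstacle -- is showing that this collected remainder is $C^\infty$ on $(-u_0,u_0)$. The constant $\overline{|u|^{\nu-1}}$ is harmless, but $\mathcal{H}(F_\nu)$ is nonlocal, so its smoothness near $u=0$ is not automatic from the smoothness of $F_\nu$ on $(-u_0,u_0)$ alone. I would resolve this by viewing $F_\nu$ as a $2\pi$-periodic function and noting that its singular support is contained in $\{\pm\pi\}$: the two constituents $\mathcal{H}(|u|^{\nu-1})$ and $|u|^{\nu-1}{\rm sgn}(u)$ are smooth on $\mathbb{T}\setminus\{0,\pm\pi\}$, and Lemma~\ref{lem-sing} removes the singularity at $u=0$, leaving only the periodic kink at $u=\pm\pi$. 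Since $\mathcal{H}$ is a zeroth-order pseudodifferential operator, it is pseudolocal, so ${\rm sing\,supp}(\mathcal{H}F_\nu) \subseteq {\rm sing\,supp}(F_\nu) \subseteq \{\pm\pi\}$; as $u_0 < \pi$, this gives $\mathcal{H}(F_\nu) \in C^\infty(-u_0,u_0)$ and completes the argument.

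Alternatively, to stay parallel to the proof of Lemma~\ref{lem-sing} and avoid invoking pseudolocality, I would first record the companion real-line table integral
\[
\frac{1}{\pi}\,{\rm p.v.}\int_{-\infty}^{\infty}\frac{|u'|^{\nu-1}{\rm sgn}(u')}{u'-u}\,du' = \tan\left(\frac{\pi\nu}{2}\right)|u|^{\nu-1},
\]
which follows from (\ref{table-intergral}) and $\mathcal{H}^2 = -{\rm Id}$ on $\mathbb{R}$, and then repeat verbatim the decomposition used in Lemma~\ref{lem-sing}: split the periodic integral (\ref{Hilbert-integral}) into the principal value over $(-\pi,\pi)$, matched to the line integral above, plus the periodization sum over $n \in \mathbb{Z}\setminus\{0\}$, whose holomorphy on compact subsets of $(-\pi,\pi)$ follows from the same Cauchy estimates. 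Either route delivers (\ref{Hilbert-table-2}) with a smooth remainder.
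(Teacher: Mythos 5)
Your proposal is correct, and your second route is essentially the paper's own argument: the paper proves Lemma~\ref{lem-sing-2} with a one-line remark invoking $\mathcal{H}^2 = -{\rm Id}$ (to produce the companion real-line table integral with the $\tan$ coefficient) together with ``a similar decomposition'' to that in the proof of Lemma~\ref{lem-sing}, i.e.\ exactly the split into the principal value over $(-\pi,\pi)$ matched to the line integral plus the smooth tails and the holomorphic periodization sum. Your first route --- applying $\mathcal{H}$ directly to the identity of Lemma~\ref{lem-sing} --- is a valid alternative and is in fact more careful than the paper's phrasing on two points: you correctly account for $\mathcal{H}^2 f = -(f-\bar f)$ on $\mathbb{T}$ rather than $-f$, and you identify (and close, via pseudolocality of $\mathcal{H}$ and the localization of ${\rm sing\,supp}\,F_\nu$ to $\{\pm\pi\}$) the genuine gap that smoothness of $F_\nu$ on $(-u_0,u_0)$ alone does not give smoothness of the nonlocal quantity $\mathcal{H}(F_\nu)$ there. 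Either route is acceptable; the second is closer to the source and avoids importing the pseudolocality machinery.
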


We are now ready to prove Theorems \ref{theorem-1} and \ref{theorem-2}.

\begin{proof}[Proof of Theorem \ref{theorem-1}]
	By using (\ref{behavior-1}) and (\ref{tilde-eta}), we assume 
	$\tilde{\eta}(u) = A |u|^{\beta} + {\rm o}(|u|^{\beta})$ for some $A > 0$ and $\beta \in (0,1]$. By Lemma \ref{lem-sing-2}, we get for every $\beta \in (0,1)$ that
	\begin{align}
	\label{exp-1}
	K_{\infty} |u|^{\beta} = - \beta \mathcal{H}(|u|^{\beta-1}\text{sgn}(u)) = -\beta\tan\left(\frac{\pi\beta}{2}\right)|u|^{\beta-1} + \mathcal{O}(1) \quad \mbox{\rm as} \;\; |u| \to 0.
	\end{align}
For $K_{\infty} |u|^{2\beta}$, we have to consider separately $2 \beta \in (0,1)$ and $2 \beta \in (1,2)$. If $2\beta\in (0,1)$, we get 
	\begin{align}
	\label{exp-2}
	K_{\infty} |u|^{2\beta} = - 2\beta \mathcal{H} (|u|^{2\beta-1}\text{sgn}(u)) =-2\beta\tan(\pi\beta)|u|^{2\beta-1} + \mathcal{O}(1) \quad \mbox{\rm as} \;\; |u| \to 0.
	\end{align}
This yields 
	\begin{align}
	\label{exp-3}
T_{\infty} \tilde{\eta} = -A^2 \beta \left[ \tan\left(\pi\beta\right)+\tan\left(\frac{\pi\beta}{2}\right)\right] |u|^{2\beta-1} + \mathcal{O}(1) + {\rm o}(|u|^{2\beta -1}) \quad \mbox{\rm as} \;\; |u| \to 0.
	\end{align}
Since $2 \beta - 1 < \beta$, the fixed-point equation $\tilde{\eta} = T_{\infty} \tilde{\eta}$ can be satified as $|u| \to 0$ if and only if 
	\begin{align}
\label{exp-4}
	\tan\left(\pi\beta\right)+\tan\left(\frac{\pi\beta}{2}\right)=0,
\end{align}
which is equivalent to $1 + 2 \cos(\pi \beta) = 0$ for $2 \beta \in (0,1)$. 
However, there are no roots of this equation for $2 \beta \in (0,1)$.

If $2 \beta \in (1,2)$, the expansion (\ref{exp-2}) is not valid. By Lemma \ref{lem-sing}, we compute 
	\begin{align*}
	\frac{\partial}{\partial u} K_{\infty} |u|^{2\beta} 
	&= -2 \beta (2 \beta - 1) \mathcal{H} (|u|^{2\beta - 2}) \\
	& = 2 \beta (2\beta-1) \cot\left(\frac{\left(2\beta-1\right)\pi}{2}\right)|u|^{2\beta-2}\text{sgn}(u) + \mathcal{O}(1) \quad \mbox{\rm as} \;\; |u| \to 0.
	\end{align*}
Integrating this equation yields
	\begin{align}
\label{exp-5}
K_{\infty} |u|^{2\beta}=2\beta\cot\left(\frac{\left(2\beta-1\right)\pi}{2}\right)|u|^{2\beta-1} + \mathcal{O}(1) \quad \mbox{\rm as} \;\; |u| \to 0.
\end{align}
Since $\cot\left(\frac{\left(2\beta-1\right)\pi}{2}\right) = -\tan(\pi \beta)$, 
expansions (\ref{exp-1}) and (\ref{exp-5}) in (\ref{fixed-point}) yields the same expansion (\ref{exp-3}) for $2 \beta \in (1,2)$. Since $2 \beta - 1 < \beta$, the balance of singular terms in $\tilde{\eta} = T_{\infty} \tilde{\eta}$ as $|u| \to 0$ is satisfied under the same condition (\ref{exp-4}), which is equivalent to $1 + 2 \cos(\pi \beta) = 0$ for $2 \beta \in (1,2)$. Hence, there exists a unique root $\beta = \frac{2}{3}$ in the admissible range $2 \beta \in (1,2)$.

It remains to rule out the marginal cases $2 \beta = 1$ and $\beta = 1$. 
It follows from Cauchy estimates in \cite[Theorem 7.1]{Locke-thesis} and 
the explicit computations of (\ref{Hilbert-integral})  that  
\begin{align*} 
\mathcal{H} (|u|) = -\frac{2}{\pi} u \ln |u| + \mathcal{O}(u) \quad \mbox{\rm as} \;\; |u| \to 0,
\end{align*}
which yields 
\begin{align}
\label{exp-6} 
K_{\infty} |u| = \frac{2}{\pi} \ln |u| + \mathcal{O}(1) \quad \mbox{\rm as} \;\; |u| \to 0.
\end{align}

If $2 \beta = 1$, then $\tilde{\eta} = \mathcal{O}(|u|^{1/2})$. Thus, expansion (\ref{exp-1}) holds and gives $\tilde{\eta} K_\infty \tilde{\eta} = \mathcal{O}(1)$, whereas (\ref{exp-6}) shows that we will have that $K_\infty \tilde{\eta}^2 = \mathcal{O(}\ln{|u|})$. Hence, it follows by (\ref{fixed-point}) that $T_\infty \tilde{\eta} = \mathcal{O}(\ln{|u|}) + \mathcal{O}(1)$. Since there is no balance of the singular terms $\mathcal{O(}\ln{|u|})$ in  $\tilde{\eta} = T_{\infty} \tilde{\eta}$ as $|u| \to 0$, the case $2 \beta = 1$ is impossible. 

If $\beta = 1$, then $\tilde{\eta} = \mathcal{O}(|u|)$ and so (\ref{exp-6}) implies that $\tilde{\eta} K_\infty \tilde{\eta} =  \mathcal{O}(|u|\ln|u|)$, while on the other hand one observes that $K_{\infty} |u|^2 = K_{\infty} u^2$, which is smooth as $|u| \to 0$. Hence, it follows by (\ref{fixed-point}) that $T_\infty \tilde{\eta} = \mathcal{O}(|u| \ln{|u|}) + \mathcal{O}(1)$. Since there is no balance of the singular terms $\mathcal{O}(|u| \ln|u|)$ in  $\tilde{\eta} = T_{\infty} \tilde{\eta}$ as $|u| \to 0$, then the case $\beta = 1$ is also impossible. 
\end{proof}

\begin{proof}[Proof of Theorem \ref{theorem-2}]
	By using (\ref{behavior-2}) and (\ref{tilde-eta}), we assume 
$\tilde{\eta}(u) = A |u|^{2/3} + B |u|^{\mu} + {\rm o}(|u|^{\mu})$ for some $A > 0$, $B \in \R$, and $\mu \in \left(\frac{2}{3},2\right)$. Similarly to (\ref{exp-1}), (\ref{exp-2}), and (\ref{exp-5}), we obtain for $\mu \neq 1$ 
that 
\begin{align*}
\tilde{\eta} K_{\infty} \tilde{\eta} = 
-\frac{2}{\sqrt{3}} A^2 |u|^{1/3} - AB \left[\frac{2}{\sqrt{3}} + \mu \tan \left(\frac{\mu \pi}{2} \right) \right] |u|^{\mu - 1/3} + \mathcal{O}(|u|^{2/3}) + {\rm o}(|u|^{\mu-1/3})
\end{align*}
and 
\begin{align*}
\frac{1}{2} K_{\infty} \tilde{\eta}^2 = 
\frac{2}{\sqrt{3}} A^2 |u|^{1/3} - AB \left(\mu + \frac{2}{3}\right) \tan \left(\frac{\pi \mu}{2} + \frac{\pi}{3} \right) |u|^{\mu - 1/3} + \mathcal{O}(1) + {\rm o}(|u|^{\mu-1/3})
\end{align*}
where we took into account that $2 \mu - 1 > \mu - \frac{1}{3}$ for $\mu > \frac{2}{3}$. When these expansions are substituted into the fixed-point equation (\ref{fixed-point}), the terms of $\mathcal{O}(|u|^{2/3})$ and $\mathcal{O}(1)$ determine a nonlocal problem for $A$ and $c$, whereas the singular term $\mathcal{O}(|u|^{\mu - 1/3})$ for $\mu \neq 1$ is not matched and must vanish. This yields the transcendental equation (\ref{Grant-eq}). The first root  $\mu = \frac{2}{3}$ of (\ref{Grant-eq}) is excluded for $\mu \in \left(\frac{2}{3},1\right)$. The second root  of (\ref{Grant-eq}) can be found graphically at $\mu \approx 1.469$ \cite{Grant}.

For the marginal case $\mu = 1$, $\tilde{\eta} K_{\infty} \tilde{\eta}$ produces $\mathcal{O}(|u|^{2/3} \ln|u|)$ due to expansion (\ref{exp-6}) and $K_{\infty} \tilde{\eta}^2$ produces $\mathcal{O}(|u|^{2/3})$ as $|u| \to 0$. Since there is no balance of singular terms in $\tilde{\eta} = T_{\infty} \tilde{\eta}$  as $|u| \to 0$, the marginal case $\mu = 1$ is excluded.
\end{proof}



\end{document}